\theoremstyle{plain}
\newtheorem{theorem}{Theorem}[section]
\newtheorem{corollary}[theorem]{Corollary}
\newtheorem{lemma}[theorem]{Lemma}
\newtheorem{proposition}[theorem]{Proposition}
\theoremstyle{remark}
\newtheorem{remark}{Remark}[section]
\begin{document}


\title[Large spectral gaps for Steklov eigenvalues]{Large spectral gaps for Steklov eigenvalues under volume constraints and under localized conformal deformations.}

\author{Donato Cianci and Alexandre Girouard}

\begin{abstract}
In this paper we construct compact manifolds with fixed boundary geometry which admit Riemannian metrics of unit volume with arbitrarily large Steklov spectral gap. We also study the effect of localized conformal deformations that fix the boundary geometry. For instance, we prove that it is possible to make the spectral gap arbitrarily large using conformal deformations which are localized on domains of small measure, as long as the support of the deformations contains and connects each component of the boundary.
\end{abstract}

\address{University of Michigan, Department of Mathematics, 530 Church Street, Ann Arbor, MI 48109, USA}

\email{cianci@umich.edu}

\address{D\'{e}partement de Math\'{e}matiques et de Statistique, Pavillon Alexandre-Vachon, Universit\'{e} Laval, Qu\'{e}bec, QC, G1V 0A6, Canada}

\email{alexandre.girouard@mat.ulaval.ca}


\maketitle


\newcommand\R{\mathbb{R}}
\newcommand\N{\mathbb{N}}

\newcommand\cont{\operatorname{cont}}
\newcommand\diff{\operatorname{diff}}

\newcommand{\dvol}{\text{dvol}}

\newcommand{\GL}{\operatorname{GL}}
\newcommand{\myO}{\operatorname{O}}
\newcommand{\myP}{\operatorname{P}}
\newcommand{\eye}{\operatorname{Id}}
\newcommand{\myF}{\operatorname{F}}
\newcommand{\vol}{\operatorname{vol}}
\newcommand{\odd}{\operatorname{odd}}
\newcommand{\even}{\operatorname{even}}
\newcommand{\ol}{\overline}
\newcommand{\mye}{\operatorname{E}}
\newcommand{\myo}{\operatorname{o}}
\newcommand{\myt}{\operatorname{t}}
\newcommand{\irr}{\operatorname{Irr}}
\newcommand{\mydiv}{\operatorname{div}}
\newcommand{\re}{\operatorname{Re}}
\newcommand{\im}{\operatorname{Im}}
\newcommand{\supp}{\operatorname{supp}}
\newcommand{\scal}{\operatorname{scal}}
\newcommand{\spec}{\operatorname{spec}}
\newcommand{\tr}{\operatorname{trace}}
\newcommand{\sgn}{\operatorname{sgn}}
\newcommand{\SL}{\operatorname{SL}}
\newcommand{\myspan}{\operatorname{span}}
\newcommand{\mydet}{\operatorname{det}}
\newcommand{\SO}{\operatorname{SO}}
\newcommand{\SU}{\operatorname{SU}}
\newcommand{\specl}{\operatorname{spec_{\mathcal{L}}}}
\newcommand{\fix}{\operatorname{Fix}}
\newcommand{\id}{\operatorname{id}}
\newcommand{\singsup}{\operatorname{singsupp}}
\newcommand{\wave}{\operatorname{wave}}
\newcommand{\ind}{\operatorname{ind}}
\newcommand{\lie}{\operatorname{Lie}}
\newcommand{\mynull}{\operatorname{null}}
\newcommand{\floor}[1]{\left \lfloor #1  \right \rfloor}

\newcommand\restr[2]{{
  \left.\kern-\nulldelimiterspace 
  #1 
  \vphantom{\big|} 
  \right|_{#2} 
  }}

\section{Introduction}
In this paper we construct compact manifolds with fixed boundary geometry which admit Riemannian metrics of unit volume with arbitrarily large Steklov spectral gap. 
Recall that the
\emph{Steklov problem} on a Riemannian manifold $M$ with boundary $\Sigma$ is:
\begin{equation*}
\begin{cases}
  \Delta u=0& \mbox{ in } M,\\
  \partial_\nu u=\sigma \,u& \mbox{ on }\Sigma,
\end{cases}
\end{equation*}
where $\Delta=\mydiv \circ \nabla$ is the Laplace-Beltrami operator acting on the
space $C^\infty(M)$ of smooth functions on
$M$, and $\partial_\nu$ is the outward normal derivative.
The real numbers $\sigma$ for which a nonzero solution $u$ to the
above partial differential equation exists are called the {\it Steklov
  eigenvalues} of $M$. They form a sequence  
$0=\sigma_1<\sigma_2\leq\sigma_3\leq\cdots\nearrow\infty$, each
repeated according to its multiplicity, which is called the {\it Steklov
  spectrum} of $M$. There is a large literature on isoperimetric type problems for the Steklov eigenvalues. Upper bounds have been obtained under various constraints, for instance the measure of the boundary for a simply connected planar domain \cite{wein}, the genus and number of boundary components of a surface \cite{MR3190427,fraschoen,gp3}, the isoperimetric ratio \cite{MR2807105}, and more recently the diameter of Euclidean domains \cite{MR3640624}. See also \cite{MR3662010} and the references therein. 

  In \cite{girouard_preprint}, families of compact manifolds for which the Steklov spectral gap $\sigma_2$ grows arbitrarily large despite having fixed boundary were produced. 
 However, the volume of the manifolds in these families also becomes arbitrarily large. Our main result presents a family of metrics with fixed boundary geometry and fixed volume for which the Steklov spectral gap grows arbitrarily large. 
\begin{theorem}\label{thm:fixed_vol}
 Let $M$ be a smooth, compact, connected manifold of dimension at least four with connected boundary $\Sigma$. Suppose there is a Riemannian metric $g_\Sigma$ on $\Sigma$ which admits a unit Killing vector field $\xi\in\Gamma(T\Sigma)$ with dual 1-form $\eta$ whose exterior derivative is nowhere zero. Then there exists a family $\{g_\epsilon\}_{\epsilon>0}$ of Riemannian metrics on $M$ which coincide with $g_\Sigma$ on $\Sigma$ such that $\vol(g_\epsilon)=1$ and $\sigma_2(M,g_\epsilon)\to\infty$ as $\epsilon\searrow 0$.
\end{theorem}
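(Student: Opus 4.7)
The plan is to adapt the construction from \cite{girouard_preprint}, which produces metrics with fixed boundary geometry and diverging Steklov spectral gap but unbounded volume, so as to impose the additional constraint $\vol(M,g_\epsilon)=1$. The Killing field hypothesis is what permits one to compress the model to unit volume while preserving the spectral divergence.

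The construction would start with a collar $[0,L]\times\Sigma$ of $\partial M$, identified via the normal exponential, on which I would install a metric of the form $g_\epsilon=dt^2+h_\epsilon(t)$. Using the Killing-field decomposition $g_\Sigma=\eta\otimes\eta+g_\mathcal{H}$ with $\mathcal{H}=\ker\eta$, take
\[
h_\epsilon(t) = a_\epsilon(t)^2\,\eta\otimes\eta+b_\epsilon(t)^2\,g_\mathcal{H},
\]
with $a_\epsilon(0)=b_\epsilon(0)=1$, so that $h_\epsilon(0)=g_\Sigma$. Outside a thin transition window near $t=0$, one chooses $a_\epsilon(t)$ and $b_\epsilon(t)$ both of order $\epsilon$, so that the cross-sections are highly shrunken. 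Since $\xi$ is Killing, $\iota_\xi d\eta=0$, which makes $\eta\otimes\eta$ and $g_\mathcal{H}$ vary smoothly with $t$ through the deformation. The rest of $M$ is filled with any convenient smooth metric that matches at $\{L\}\times\Sigma$, and a global factor is applied to ensure $\vol(M,g_\epsilon)=1$; since the collar volume can be made arbitrarily small by the shrinking, only the bulk metric is affected by a bounded factor as $\epsilon\to 0$.

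The Steklov spectrum of this model is then analyzed via the symmetry provided by $\xi$. Decomposing $L^2(\Sigma)$ according to the action generated by $\xi$ (or its closure in the isometry group), the problem on the collar splits into a family of one-dimensional radial problems, one per Reeb-frequency mode. Modes with non-trivial $\xi$-frequency have gradient energy in the Killing direction amplified by a factor $1/a_\epsilon^2\to\infty$, which forces their Steklov eigenvalues to diverge. The delicate point is the $\xi$-invariant sector, where no direct shrinking along $\xi$ applies. Here the hypothesis $d\eta\ne 0$ enters essentially: it ensures that $\mathcal{H}$ is nowhere integrable (a contact-type non-integrability), which yields, via a H\"ormander-type or Bochner-type inequality, a uniform positive lower bound on the transverse Laplace spectrum that can be combined with the factor $1/b_\epsilon^2$ to push the corresponding Steklov eigenvalues to infinity.

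The main obstacle is precisely the $\xi$-invariant sector: choosing the profiles $(a_\epsilon,b_\epsilon)$ and the bulk extension so that invariant test functions also have diverging Rayleigh quotients, all while preserving the unit-volume normalization and the smoothness of $g_\epsilon$ across $\{L\}\times\Sigma$. Once this is handled, applying the min--max characterization of $\sigma_2$ separately in each spectral sector, together with the decomposition of trial functions into their $\xi$-Fourier components, yields $\sigma_2(M,g_\epsilon)\to\infty$ as $\epsilon\searrow 0$.
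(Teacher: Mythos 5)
The proposal misses the key mechanism and the proposed construction would not prove the theorem. Your plan shrinks both the Killing direction and the horizontal distribution ($a_\epsilon,b_\epsilon\to 0$), but you have overlooked the effect of the cross-sectional volume form. On the collar, the tangential contribution to the Dirichlet energy of a test function $u$ reads
\[
\int_{\Sigma}\bigl(a_\epsilon^{-2}|\xi u|^2+b_\epsilon^{-2}|\nabla_{\mathcal H} u|^2\bigr)\,
a_\epsilon\,b_\epsilon^{\,m-1}\,d\mathrm{vol}(g_\Sigma),
\]
where $m=\dim\Sigma\geq 3$. The pointwise amplification $a_\epsilon^{-2}$ that you invoke is cancelled, and then some, by the shrinking volume form factor $a_\epsilon b_\epsilon^{m-1}$: with $a_\epsilon=b_\epsilon=\epsilon$ the Killing-direction contribution scales like $\epsilon^{m-2}\to 0$, and the horizontal contribution scales like $\epsilon^{m-2}\to 0$ as well. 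Since the denominator of the Steklov Rayleigh quotient is the fixed boundary $L^2$-norm, shrinking the collar only \emph{lowers} the Dirichlet energy of trial functions; it cannot force $\sigma_2\to\infty$. Playing with different exponents for $a_\epsilon$ and $b_\epsilon$ does not rescue this either: you cannot simultaneously make both the $\xi$-direction factor $a_\epsilon^{-1}b_\epsilon^{m-1}$ and the horizontal factor $a_\epsilon b_\epsilon^{m-3}$ diverge, so some mode is always damped.

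The ingredient you are missing is Bleecker's construction, which is precisely designed to escape this volume/energy cancellation. The paper uses the family
\[
g_\Sigma(t)=t^{-1}g_\Sigma+(t^{n}-t^{-1})\,\eta\otimes\eta,
\]
which \emph{stretches} the Killing direction and \emph{shrinks} the horizontal directions at exactly compensating rates so that the induced volume form $d\Omega$ is independent of $t$. With the volume form fixed, the lower bound $\lambda_2(g_\Sigma(t))\geq\delta t$ (Bleecker's theorem, where $d\eta\neq 0$ is the hypothesis that makes the lower bound hold uniformly over all modes, not just the $\xi$-invariant ones) translates directly into a lower bound for $\int_\Sigma |d_\Sigma u|^2_{g_\Sigma(t)}\,d\Omega$ in terms of $t\int_\Sigma u^2\,d\Omega$, and no volume-form loss occurs. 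The collar metric $g_\epsilon=g_\Sigma(f_\epsilon(t))+dt^2$ then has $\epsilon$-independent volume on the collar by construction, so no renormalizing global factor (which would also distort the boundary metric) is needed. The Fourier-mode/Rayleigh-quotient bookkeeping that you outline is essentially the same as in the paper, but it only closes because the volume form has been frozen first; without that, the ``$\xi$-invariant sector'' worry you flag is not even the main obstruction — every sector fails.
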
  

Note that it is also possible to construct the family $g_\epsilon$ so that $\sigma_2(M,g_\epsilon)\to\infty$, $\vol(M,g_\epsilon)\to 0$, and  diam$(M,g_\epsilon)\to 0$ as $\epsilon\searrow 0$. See Remark \ref{rem:smallvolumediameter}.
 We do not know if a statement analogous to Theorem \ref{thm:fixed_vol} holds in dimension three. Note that on surfaces $\sigma_2$ is bounded above in terms of the area and genus \cite{MR3190427,MR2770439}. The proof of Theorem \ref{thm:fixed_vol} will be presented in Section \ref{section:gaps_with_fixed_volume}. It is based on a construction due to Bleecker of closed manifolds with large Laplace-spectral gap and fixed volume form \cite{bleecker}. It applies for instance to the situation where $\Sigma$ is isometric to a round sphere of odd dimension.

If we remove the volume constraint, then it is easier to construct Riemannian metrics that have arbitrarily large Steklov eigenvalues while keeping the boundary fixed. 
Indeed, it is enough to consider conformal deformations $e^hg$ where the smooth function $h$ belongs to the space
$$C^\infty_0(M)=\{h\in C^\infty(M)\,:\,h=0\mbox{ on }\Sigma\}.$$
For any manifold with dimension at least 3, Colbois, El Soufi and the second author \cite{girouard_preprint} constructed a family of smooth functions $h_\epsilon\in C^\infty_0(M)$ such that
$\sigma_2(M,e^{h_\epsilon}g)\to\infty$ as $\epsilon\searrow 0$.
This family has the additional property that for any point $x$ in the interior of $M$, 
$\lim_{\epsilon\to 0}h_\epsilon(x)=+\infty.$
The following theorem is a refinement of this construction. It shows that it suffices to localize the conformal deformations on an arbitrarily small connected neighborhood of the boundary. 
\begin{figure}\label{figure:localized}
  \includegraphics[width=10cm]{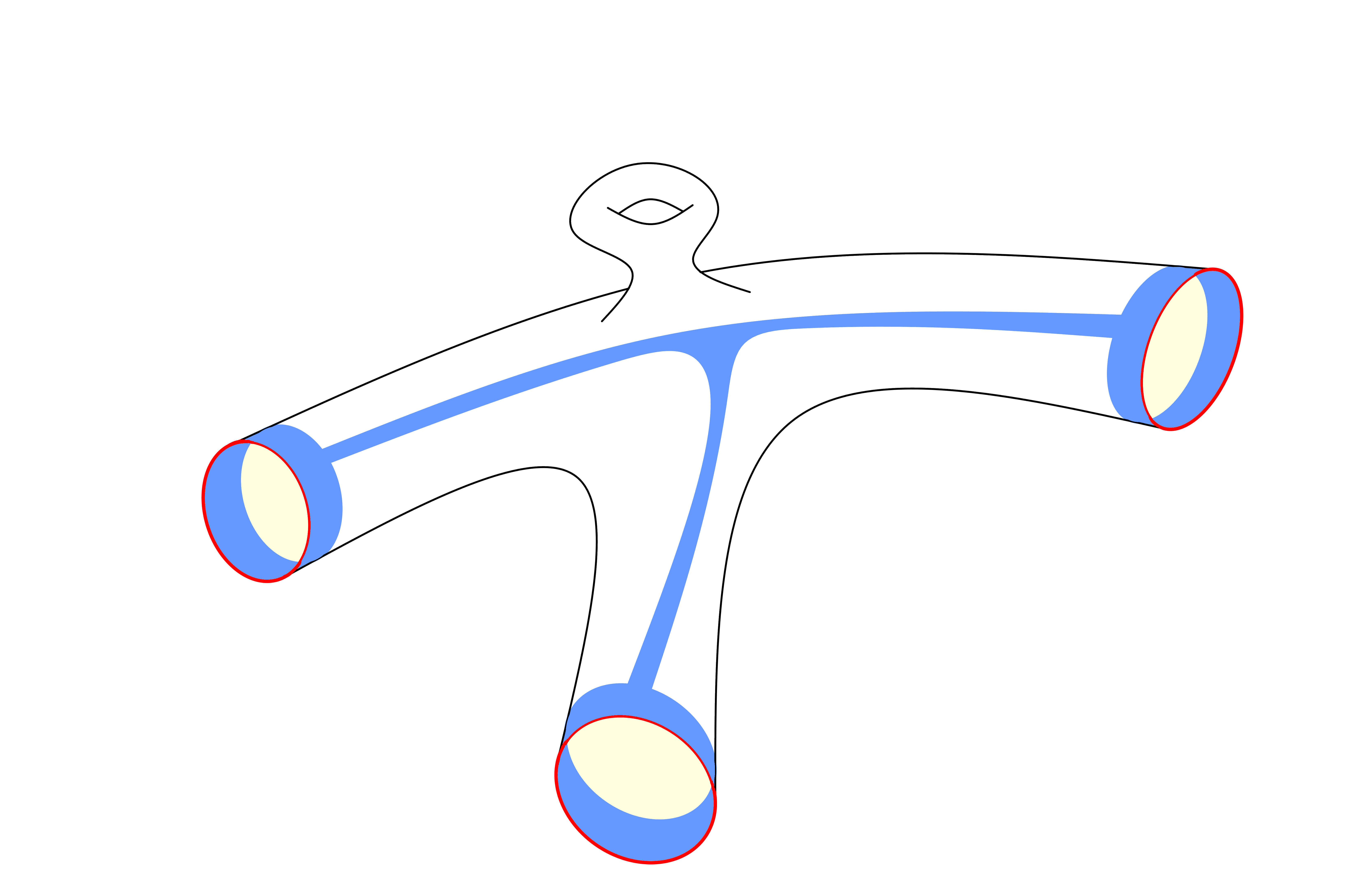}
  \caption{On this manifold it is possible to
    make $\sigma_2$ arbitrarily large by using a conformal deformation that
    is localized in the shaded connected neighborhood of the boundary.}
\end{figure}
\begin{theorem}
 \label{main_result1}
  Let $(M,g)$ be a smooth connected compact manifold of dimension
  $\geq 3$  with boundary $\Sigma$. Let $\Omega\subset M$ be a connected
  neighborhood of the boundary $\Sigma$, then there exists a family
  $h_\epsilon\in C^\infty_0(\Omega)$, with $\epsilon>0$, such that
  $$\lim_{\epsilon\to 0}\sigma_2(M,e^{h_\epsilon}g)=+\infty.$$
\end{theorem}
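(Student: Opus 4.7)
The plan is to proceed variationally. First, express $\sigma_2$ under the conformal change $\tilde g=e^h g$ as a Rayleigh quotient: since $h|_\Sigma=0$, the induced boundary measure is unchanged and
$$\sigma_2(M,\tilde g)=\inf\Bigl\{\int_M e^{(n-2)h/2}|\nabla u|_g^2\,dv_g : u\in H^1(M),\ \int_\Sigma u\,ds_g=0,\ \int_\Sigma u^2\,ds_g=1\Bigr\}.$$
Since $\Sigma$ is compact and $\Omega$ is open, a normal collar $\Sigma\times[0,T]$ embeds in $\Omega$. Pick parameters $\delta_\epsilon\searrow 0$ and $A_\epsilon\to\infty$, to be coupled below, and construct $h_\epsilon$ to equal the constant $A_\epsilon$ on the annular slab $\mathcal{A}_\epsilon:=\Sigma\times[\delta_\epsilon,2\delta_\epsilon]$, to vanish on $\Sigma$ and off $\Sigma\times[0,3\delta_\epsilon]$, and to interpolate smoothly; then $h_\epsilon\in C_0^\infty(\Omega)$.

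The core of the argument will be a lower bound for this infimum. Given an admissible $u$ with weighted energy $E$, write $f=u|_\Sigma$ and $v=u|_{t=2\delta_\epsilon}$. Three estimates combine. (i) Since the conformal weight is $\geq 1$, one-dimensional Cauchy--Schwarz along normal rays on $\Sigma\times[0,2\delta_\epsilon]$ gives $\|f-v\|_{L^2(\Sigma)}^2\leq C_1\delta_\epsilon E$. (ii) On $\mathcal{A}_\epsilon$ the weight equals $e^{(n-2)A_\epsilon/2}$, so $\|\nabla u\|_{L^2(\mathcal{A}_\epsilon)}^2\leq e^{-(n-2)A_\epsilon/2}E$; pairing a Neumann--Poincar\'e inequality on $\mathcal{A}_\epsilon$ with a trace inequality onto $\Sigma\times\{2\delta_\epsilon\}$ yields $\|v-\bar u\|_{L^2(\Sigma)}^2\leq C_2\,\delta_\epsilon^{-1}\,e^{-(n-2)A_\epsilon/2}E$, where $\bar u$ is the mean of $u$ over $\mathcal{A}_\epsilon$. (iii) Integrating $u(x,t)-f(x)=\int_0^t\partial_s u\,ds$ over $\mathcal{A}_\epsilon$ and using $\int_\Sigma f=0$ gives $|\bar u|\leq C_3\sqrt{\delta_\epsilon E}$. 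Inserting (ii) and (iii) into $\|v\|_{L^2(\Sigma)}^2\leq 2\|v-\bar u\|^2+2|\Sigma|\,\bar u^2$ and then into $1=\|f\|^2\leq 2\|f-v\|^2+2\|v\|^2$, together with (i), yields $1\leq E\bigl(C\delta_\epsilon+C'\,\delta_\epsilon^{-1}e^{-(n-2)A_\epsilon/2}\bigr)$. Choosing $A_\epsilon$ so large that the exponential term is dominated by the first (for instance $A_\epsilon\asymp\log(1/\delta_\epsilon)$) forces $E\geq c/\delta_\epsilon$, whence $\sigma_2(M,e^{h_\epsilon}g)\geq c/\delta_\epsilon\to\infty$.

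The main obstacle will be controlling the Poincar\'e and trace constants on the shrinking slab $\mathcal{A}_\epsilon$ as $\delta_\epsilon\to 0$. The Neumann--Poincar\'e constant should remain uniformly bounded because the thin transverse direction contributes a first Neumann eigenvalue of order $\pi^2/\delta_\epsilon^2$, leaving the tangential spectrum of $(\Sigma,g_\Sigma)$ in charge; the trace constant, in contrast, degenerates like $\delta_\epsilon^{-1}$, and it is precisely this blow-up that dictates how fast $A_\epsilon$ must be chosen to grow with $\delta_\epsilon$ in the exponential coupling above. Verifying these two claims in the actual collar geometry (where the metric is not a product) via a straightforward comparison argument should complete the proof.
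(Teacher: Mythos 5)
Your route is genuinely different from the paper's: you place the large conformal weight in a thin slab $\mathcal{A}_\epsilon=\Sigma\times[\delta_\epsilon,2\delta_\epsilon]$ hugging the boundary, whereas the paper's conformal factor (in Lemma~\ref{mixed} and Proposition~\ref{auxiliary}, with the quasi-isometry reduction feeding into them) blows up on the \emph{deep} part of $\Omega$, namely $\Omega\setminus(\Sigma\times[0,L/2))$. Your three estimates do combine correctly and give $\sigma_2\geq c/\delta_\epsilon$ --- but only when $\Sigma$ is connected, and the theorem does not assume that; it assumes only that $M$ and $\Omega$ are connected.

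The gap is in step (ii). If $\Sigma$ has $b\geq 2$ connected components, then for small $\delta_\epsilon$ the slab $\mathcal{A}_\epsilon$ also has $b$ components, and the first nonzero Neumann eigenvalue of a disconnected domain is zero. So the Neumann--Poincar\'e inequality you invoke (``the thin transverse direction contributes $\pi^2/\delta_\epsilon^2$, leaving the tangential spectrum of $(\Sigma,g_\Sigma)$ in charge'') gives no control on $\|u-\bar u\|_{L^2(\mathcal{A}_\epsilon)}$, because the tangential spectrum of a disconnected $\Sigma$ has a zero eigenvalue of multiplicity $b$. This is not a technicality that sharper estimates can repair: since your $h_\epsilon$ is supported in $\Sigma\times[0,3\delta_\epsilon]$, which for small $\delta_\epsilon$ is a $b$-component neighborhood of $\Sigma$, Proposition~\ref{half_theorem} of the paper shows $\sigma_b(e^{h_\epsilon}g)$ stays bounded \emph{uniformly} in $A_\epsilon$ and $\delta_\epsilon$; for $b\geq2$ this bounds $\sigma_2$. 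Concretely, a test function equal to suitable distinct constants on the different collars (chosen to have mean zero on $\Sigma$) has zero gradient inside $\mathcal{A}_\epsilon$ and hence pays no price from the weight, defeating the mechanism of your proof. The ingredient you are missing is exactly what the connectedness of $\Omega$ buys, and what the paper's construction uses: the conformal factor must blow up on a region of $\Omega$ that \emph{connects} the boundary components, so that the locally constant competitor above is forced to transition inside the high-weight region. Your construction never touches $\Omega$ beyond a shrinking collar and therefore cannot exploit that hypothesis. For connected $\Sigma$ your argument is a clean alternative, but as a proof of Theorem~\ref{main_result1} it needs a separate mechanism (e.g.\ also blowing up the weight on a connected set in $\Omega$ that meets every component of the collar, as in the paper) to handle $b\geq 2$.
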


Theorem \ref{main_result1} is a special case of the more general Theorem \ref{thm:localizedgap} below, with $b=1$. Applying a diagonal argument, immediately leads to the following corollary (see Figure \ref{figure:localized}):
\begin{corollary}
  There exists a family $h_\epsilon\in C^\infty_0(M)$ such that
  $\vol_g(\supp h_\epsilon)\to 0$ 
    and $\sigma_2(M,e^{h_\epsilon}g)\to+\infty$ as $\epsilon\to 0$.
\end{corollary}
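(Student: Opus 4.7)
The plan is to apply Theorem \ref{main_result1} along a nested family of connected neighborhoods of $\Sigma$ whose volumes shrink to zero, and then extract a diagonal subsequence.

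First, I would construct a sequence $\{\Omega_n\}_{n\in\N}$ of connected open neighborhoods of $\Sigma$ in $M$ with $\vol_g(\Omega_n)\to 0$. Using the tubular neighborhood theorem, fix a collar embedding $\Sigma\times[0,\delta_0)\hookrightarrow M$. If $\Sigma$ is connected, then the thin collar $\Sigma\times[0,1/n)$ is already a connected open neighborhood of $\Sigma$ whose volume tends to $0$. If $\Sigma$ has several components, I would first choose once and for all a finite collection of smooth embedded arcs in the interior of $M$ joining those components (possible because $M$ is connected), then thicken each arc to a narrow open tube; the union of a thin collar of $\Sigma$ and these thin tubes is a connected open neighborhood $\Omega_n$ of $\Sigma$, and by simultaneously shrinking the collar thickness and the tube radii its volume can be made smaller than $1/n$.

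Next, for each $n\in\N$, I would apply Theorem \ref{main_result1} with $\Omega=\Omega_n$ to obtain a family $\{h_\epsilon^{(n)}\}_{\epsilon>0}\subset C^\infty_0(\Omega_n)\subset C^\infty_0(M)$ such that $\sigma_2(M,e^{h_\epsilon^{(n)}}g)\to+\infty$ as $\epsilon\to 0$. Pick $\epsilon_n>0$ small enough that $\sigma_2(M,e^{h_{\epsilon_n}^{(n)}}g)\geq n$, and set $h_n:=h_{\epsilon_n}^{(n)}$. Since $\supp h_n\subset\Omega_n$, we get $\vol_g(\supp h_n)\leq\vol_g(\Omega_n)\to 0$, together with $\sigma_2(M,e^{h_n}g)\to+\infty$. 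Relabelling by a continuous parameter (for example $h_\epsilon:=h_{\lceil 1/\epsilon\rceil}$) produces the family $\{h_\epsilon\}_{\epsilon>0}$ required by the corollary.

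I do not anticipate a serious analytic obstacle, since all of the spectral content is packaged inside Theorem \ref{main_result1}. The only step that requires any thought is the topological construction of connected neighborhoods of $\Sigma$ with arbitrarily small volume in the case when $\Sigma$ is disconnected; the thin-collar-plus-thin-tube construction above handles this cleanly, and it is crucial only that the resulting sets be both connected (so that Theorem \ref{main_result1} applies) and of vanishing volume.
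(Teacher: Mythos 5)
Your proposal is correct and is exactly the diagonal argument the paper alludes to (the paper itself only remarks that the corollary follows immediately from Theorem~\ref{main_result1} by a diagonal argument, and you have simply spelled out the details). The thin-collar-plus-thin-tube construction for disconnected $\Sigma$ is precisely the kind of connected neighborhood of small volume that Figure~1 depicts, so no discrepancy with the authors' intent.
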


In order to understand what happens when the domain $\Omega$ is not connected, we introduce a family of
localized conformal invariants. Given a neighborhood $\Omega\subset
M$ of the boundary $\Sigma$, define:
$$\sigma^{\#}_j(M,\Omega)=\sup\{\sigma_j(M,e^hg)\,:\,h\in C^\infty_0(M),\,
\supp(h)\subset\Omega\}.$$
The next result shows that the finiteness of $\sigma^{\#}_j(M,\Omega)$ depends
on the number of connected components of $\Omega$ that have nontrivial
intersection with the boundary.   
\begin{theorem}
  \label{thm:localizedgap}
  Let $(M,g)$ be a smooth connected compact manifold of dimension at
  least three with boundary $\Sigma$. Let $\Omega\subset M$ be a
  neighborhood of the boundary $\Sigma$.
  If $b\geq 1$ is the number of connected components of $\Omega$ 
  which intersect the boundary $\Sigma$, 
  then $\sigma_{b}^\#(M,\Omega)<\infty$ and $\sigma_{b+1}^\#(M,\Omega)=\infty$. 
\end{theorem}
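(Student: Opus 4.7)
The theorem splits into two complementary parts: showing that $\sigma_b^\#(M,\Omega) < \infty$ and that $\sigma_{b+1}^\#(M,\Omega) = \infty$.

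For the finiteness, label the connected components of $\Omega$ that meet $\Sigma$ by $U_1,\ldots,U_b$. Each connected component of $\Sigma$ is connected and contained in the open set $\Omega$, hence lies in a single $U_i$, and so $\Sigma$ partitions as $\Sigma=\Sigma_1\sqcup\cdots\sqcup\Sigma_b$ with $\Sigma_i:=\Sigma\cap U_i$. The plan is to produce a $b$-dimensional trial subspace in $H^1(M)$ whose Rayleigh quotients are uniformly bounded in $h$. For each $i$, construct a smooth $\phi_i:M\to[0,1]$ that equals $1$ on $U_i$, equals $0$ on every other component of $\Omega$, and is any smooth interpolation on $M\setminus\Omega$. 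Then $\phi_i$ is locally constant on $\Omega$, so $\nabla\phi_i$ is supported in $M\setminus\Omega$; every admissible $h$ vanishes there, so the conformal factor $e^{(n-2)h/2}$ equals $1$ on $\mathrm{supp}\,\nabla\phi_i$, and
\[
\int_M |\nabla\phi_i|^2_{\tilde g}\,dv_{\tilde g} = \int_M |\nabla\phi_i|^2_g\,dv_g =: C_i
\]
is a constant independent of $h$. Since also $\int_\Sigma\phi_i^2\,ds=|\Sigma_i|$, the min-max principle applied to $V:=\mathrm{span}(\phi_1,\ldots,\phi_b)$ bounds $\sigma_b(M,e^hg)$ by the largest generalized eigenvalue of the $b\times b$ matrices $\bigl(\int_M\langle\nabla\phi_i,\nabla\phi_j\rangle\,dv_g\bigr)$ and $\mathrm{diag}(|\Sigma_i|)$, uniformly in $h$. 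The only technical caveat is that the smooth Urysohn construction of $\phi_i$ requires the closures $\overline{U_j}$ of the components of $\Omega$ to be pairwise disjoint in $M$, which is the generic situation one has in mind.

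For the divergence of $\sigma_{b+1}^\#$, the strategy is to localize Theorem~\ref{main_result1} in each $U_i$ and combine via Neumann bracketing. For each $i$, pick a connected open $V_i$ with $\Sigma_i\subset V_i$ and $\overline{V_i}\subset U_i$. Adapting the construction behind Theorem~\ref{main_result1} to the mixed Steklov--Neumann problem on $U_i$---Steklov on $\Sigma_i$, Neumann on the internal boundary $\partial U_i\setminus\Sigma_i$---produces $h^{(i)}_\epsilon\in C^\infty(M)$ with $\mathrm{supp}\,h^{(i)}_\epsilon\subset V_i$, $h^{(i)}_\epsilon|_\Sigma=0$, and the second mixed eigenvalue $\sigma_2^{SN}(U_i,e^{h^{(i)}_\epsilon}g)\to+\infty$ as $\epsilon\to 0$. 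The $V_i$ are pairwise disjoint, so $h_\epsilon:=\sum_{i=1}^b h^{(i)}_\epsilon$ is admissible in the definition of $\sigma_{b+1}^\#(M,\Omega)$. Imposing additional Neumann conditions on the internal surfaces $\partial U_i\setminus\Sigma_i$ enlarges the variational class of admissible test functions and therefore decreases each eigenvalue, giving
\[
\sigma_{b+1}(M,e^{h_\epsilon}g) \;\geq\; \sigma_{b+1}^{\mathrm{dec}}(e^{h_\epsilon}g),
\]
where $\sigma^{\mathrm{dec}}$ is the pooled, reordered spectrum of the mixed Steklov--Neumann problems on the $U_i$. This pooled spectrum begins with exactly $b$ zero eigenvalues, one per $U_i$ (from the constants), followed by $\min_i\sigma_2^{SN}(U_i,e^{h^{(i)}_\epsilon}g)$, which tends to $+\infty$. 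Consequently $\sigma_{b+1}^\#(M,\Omega)=\infty$.

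The main obstacle I expect is adapting Theorem~\ref{main_result1} to the mixed Steklov--Neumann setting on each $U_i$: since $U_i$ is only an open subset of $M$ rather than a smooth compact manifold with boundary in the standard sense, one must verify that the conformal-blow-up construction of Theorem~\ref{main_result1} produces the claimed divergence in the presence of the Neumann face $\partial U_i\setminus\Sigma_i$ (e.g., by working on $\overline{U_i}$ or a small smoothing thereof). Once this localized version is in hand, the Neumann bracketing step is standard and the finite-dimensional trial-space argument completes the upper bound.
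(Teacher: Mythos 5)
Your finiteness argument is essentially the paper's (Proposition \ref{half_theorem}): plateau functions whose gradients live outside $\Omega$, where the conformal factor is trivial, give an $h$-independent $b$-dimensional test space. The caveat you flag about pairwise disjoint closures is harmless --- the paper works with a closed $\Omega$ with smooth boundary, whose components are disjoint closed sets in the compact manifold $M$ and hence have disjoint open neighborhoods by normality, and the general open case reduces to this by shrinking $\Omega$ slightly.

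The divergence argument is where you depart from the paper, and also where the real gaps are. Your combinatorial device --- full Neumann bracketing across all interior interfaces to get a decoupled spectrum with exactly $b$ zero modes, hence $\sigma_{b+1}(M,e^{h_\epsilon}g)\geq\min_i \sigma_2^{SN}(U_i,e^{h^{(i)}_\epsilon}g)$ --- is a legitimate alternative to the paper's pigeonhole step (Proposition \ref{auxiliary} picks a single component carrying at least $1/b$ of the boundary $L^2$ mass and brackets only there). Both are sound and of comparable effort. However, two ingredients you rely on are not actually established in your write-up.

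First, you explicitly leave the mixed Steklov--Neumann gap-divergence statement as a flagged ``obstacle'' rather than proving it. This is precisely the content of Lemma \ref{mixed} in the paper, which carries out the Fourier-decomposition estimate of \cite{girouard_preprint} in the presence of a Neumann face, and it is not a cosmetic adaptation: one has to check that the eigenfunction expansion on $\Sigma^S$, the Case 1/Case 2 split on the Fourier coefficients, and the handling of the remaining low modes ($j\leq b$, when $b\geq 2$) via the Neumann Poincar\'e inequality on the complement all go through. Without this your construction of the $h^{(i)}_\epsilon$ is an assertion, not a proof.

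Second, and independently, you never address the hypothesis that a neighborhood of $\Sigma_i$ be isometric to a Riemannian product $\Sigma_i\times[0,L)$. The Fourier-series machinery that makes the conformal blow-up work (both in Theorem \ref{main_result1}'s source and in Lemma \ref{mixed}) requires this product structure so that the Laplace eigenfunctions of $\Sigma_i$ give a $t$-independent basis. For a general metric $g$ this is false, and the paper spends the final portion of the proof replacing $g$ by a metric $g_\delta$ that is a genuine product in Fermi coordinates on $N(\delta)$, then transferring the estimate back using the quasi-isometry bound $\sigma_k(g_1)/\sigma_k(g_2)\leq A^{2n+1}$. Without this reduction, your localized blow-up does not apply to the given $(M,g)$. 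These two omissions mean the divergence half of your proposal is a correct plan but not a proof.
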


 If $\Omega$
is a thin collar 
neighborhood of $\Sigma$, then $b$ is the number of connected
components of the boundary $\Sigma$. One should compare Theorem \ref{thm:localizedgap} with
\cite[Theorem 1.1]{girouard_preprint}.

\begin{remark}
In the present paper the geometry of the boundary is fixed in the sense that the Riemannian metric $g_\Sigma$ on the boundary $\Sigma$ is given. One can also study the situation where $\Sigma\subset\R^m$ is a prescribed $n$-dimensional submanifold and $M\subset\R^m$ is allowed to vary  among all $(n+1)$-dimensional submanifolds with boundary $\Sigma$. In this context Colbois, Gittins and the second author [CGG] proved that $\sigma_2$ is bounded above in terms of the volume of $M$. More precisely, they proved the existence of a constant $A=A(\Sigma)$ which depends on the embedding $\Sigma\subset\R^m$ such that
$\sigma_2\leq A\vol(M)$. If the Riemannian manifolds $(M,g_\epsilon)$ constructed in Theorem \ref{thm:fixed_vol} are isometrically embedded in some Euclidean space $\R^m$ (say using the Nash embedding theorem) then it follows that the corresponding constants $A(\Sigma_\epsilon)\to \infty$ as $\epsilon\searrow 0$. In particular, the distortion of $\Sigma_\epsilon$ tends to $+\infty$ (see [CGG] for details regarding the distortion).
\end{remark}

\subsection*{Acknowledgments}
DC was supported by a CRM-Laval Postdoctoral Fellowship during the year 2016/2017 when this project was started. AG acknowledges the support of NSERC.

\section{Large Steklov eigenvalues on manifolds with fixed volume and 
  boundary}
  \label{section:gaps_with_fixed_volume}

The goal of this section is to prove Theorem \ref{thm:fixed_vol}. 
Recall the following result, due to Bleecker:
\begin{theorem}[\cite{bleecker}, Theorem 2.3]
\label{bleecker_theorem}
Let $(\Sigma,g_\Sigma)$ be a closed, connected, Riemannian manifold. Let $\xi$ be a unit Killing vector field on $\Sigma$ with dual 1-form $\eta$ such that $d\eta$ is nowhere zero. Let $\lambda_2(t)$ be the first nonzero eigenvalue of the Laplacian for the metric 
$$g_\Sigma(t)=t^{-1}g_\Sigma +(t^n - t^{-1}) \eta \otimes \eta.$$
Then there exists $\delta>0$ such that
$\lambda_2(t)\geq\delta t,$ for $t\geq 1$.
\end{theorem}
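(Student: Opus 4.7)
The plan is to use the Rayleigh-quotient characterization of $\lambda_2(t)$ and reduce the bound $\lambda_2(t)\ge\delta t$ to a $t$-independent spectral-gap inequality for a horizontal sub-Laplacian on $(\Sigma,g_\Sigma)$, which is in turn provided by the H\"ormander condition encoded in $d\eta\ne 0$.

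First, choose a local $g_\Sigma$-orthonormal frame $\{\xi,e_2,\dots,e_n\}$. Since $\eta(\xi)=1$ and $\eta(e_i)=0$ for $i\ge 2$, the matrix of $g_\Sigma(t)$ in this frame is diagonal with entries $(t^n,t^{-1},\dots,t^{-1})$. Hence $dv_{g_\Sigma(t)}=t^{1/2}\,dv_{g_\Sigma}$, and writing $\nabla^\perp f:=\nabla^{g_\Sigma}f-(\xi f)\xi$, one has $|\nabla^{g_\Sigma(t)}f|^2_{g_\Sigma(t)}=t^{-n}(\xi f)^2+t\,|\nabla^\perp f|^2_{g_\Sigma}$. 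Rayleigh's principle then gives
$$\lambda_2(t)\;=\;\inf_{\substack{f\in C^\infty(\Sigma)\\ \int_\Sigma f\,dv_{g_\Sigma}=0}}\frac{t^{-n}\int_\Sigma(\xi f)^2\,dv_{g_\Sigma}+t\int_\Sigma|\nabla^\perp f|^2\,dv_{g_\Sigma}}{\int_\Sigma f^2\,dv_{g_\Sigma}}.$$
Discarding the non-negative first term, it suffices to establish the $t$-independent estimate
$$\int_\Sigma|\nabla^\perp f|^2\,dv_{g_\Sigma}\;\ge\;\delta\int_\Sigma f^2\,dv_{g_\Sigma}\qquad(\star)$$
for some $\delta>0$ and every smooth $f$ with zero $g_\Sigma$-mean; this immediately yields $\lambda_2(t)\ge\delta t$.

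Inequality $(\star)$ is the spectral gap of the horizontal sub-Laplacian $\Delta^\perp:=-\mydiv(\nabla^\perp\cdot)$ on mean-zero functions. To interpret the hypothesis $d\eta\ne 0$, observe that for sections $X,Y$ of the horizontal distribution $\xi^\perp$ one has $\eta([X,Y])=-d\eta(X,Y)$, so $d\eta$ nowhere zero means that at every point some horizontal bracket has non-trivial vertical component; thus $\xi^\perp$ is bracket-generating at step $2$. By Chow--Rashevskii, any two points of $\Sigma$ are joined by a horizontal curve, which forces any smooth $f$ with $\nabla^\perp f\equiv 0$ to be constant; and by H\"ormander's theorem $\Delta^\perp$ is hypoelliptic on the closed compact manifold $\Sigma$ with compact resolvent, so its spectrum is discrete. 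Combined with the trivial kernel on mean-zero functions, this yields a strictly positive first non-zero eigenvalue $\delta$, which is $(\star)$.

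The main obstacle is this subelliptic step: producing a uniform, $t$-independent positive lower bound on the Rayleigh quotient of the degenerate operator $\Delta^\perp$. A naive approach using the joint spectral decomposition of $\Delta_{g_\Sigma}$ with the commuting skew-adjoint operator $\xi$ shows that a failure of $(\star)$ would produce functions with $\|\nabla^\perp f\|/\|f\|\to 0$ and arbitrarily high vertical frequency, and it is precisely the hypothesis $d\eta\ne 0$---reinterpreted as step-$2$ bracket-generation---that excludes such concentrating sequences via H\"ormander's theorem.
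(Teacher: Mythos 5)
The paper does not actually prove this statement---it is quoted verbatim from Bleecker and used as a black box---so the only comparison available is with Bleecker's original argument, which works directly with the Laplacians of the metrics $g_\Sigma(t)$ and the Killing field; your route through a $t$-independent sub-Riemannian Poincar\'e inequality is a genuinely different (and quite clean) packaging. Your reduction itself is correct: in a $g_\Sigma$-orthonormal frame adapted to $\xi$ the metric $g_\Sigma(t)$ is diagonal with entries $t^n$ and $t^{-1}$, the inverse metric gives $|\nabla^{g_\Sigma(t)}f|^2_{g_\Sigma(t)}=t^{-n}(\xi f)^2+t|\nabla^\perp f|^2$, and since $dv_{g_\Sigma(t)}$ is a \emph{constant} multiple of $dv_{g_\Sigma}$ the volume factor cancels from the Rayleigh quotient and does not disturb the mean-zero constraint; discarding the vertical term then yields $\lambda_2(t)\geq\delta t$ once the horizontal gap $(\star)$ is known. (One bookkeeping remark: in Bleecker's convention $\dim\Sigma=n+1$, so the determinant is $1$ and the volume form is $t$-independent---which is exactly the feature the paper exploits later---rather than scaled by $t^{1/2}$ as you wrote; this is harmless for your computation but worth aligning.)

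Two steps need tightening. First, your passage from ``$d\eta$ nowhere zero'' to ``the horizontal distribution is bracket-generating'' silently assumes that $d\eta$ does not vanish when restricted to $\xi^\perp\times\xi^\perp$; a priori $d\eta$ could be nonzero only through terms of the form $\eta\wedge\alpha$. This is rescued by the unit Killing hypothesis: $\nabla\eta$ is skew, $g(\nabla_X\xi,\xi)=0$ and $\nabla_\xi\xi=0$, whence $\iota_\xi d\eta=0$, so $d\eta$ is purely horizontal and its nonvanishing does give $\eta([X,Y])=-d\eta(X,Y)\neq0$ for suitable horizontal $X,Y$ at every point. Second, ``H\"ormander's theorem\ldots with compact resolvent'' conflates two things: hypoellipticity alone is a qualitative regularity statement and does not by itself produce a spectral gap. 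What you need is the quantitative subelliptic estimate for the step-$2$ bracket-generating case, $\|u\|_{H^{1/2}}^2\leq C\bigl(\|u\|_{L^2}^2+\int_\Sigma|\nabla^\perp u|^2\,dv_{g_\Sigma}\bigr)$, which (with Rellich) gives compactness of the form-domain embedding, hence discrete spectrum of $\Delta^\perp$; combined with your Chow--Rashevskii argument identifying the kernel with constants, this yields $(\star)$. Both points are standard and fixable in a few lines, so the proof goes through once they are supplied.
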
 

A crucial feature of the metrics $g_\Sigma(t)$ is the fact that they induce the same volume form $d\Omega$ for each $t\geq 1$. This allows us to pick an orthonormal basis $\{\pi_i\}_{i=1}^\infty$ of $L^2(\Sigma, d\Omega)$ which is independent of $t$. This basis can be used to perform a Fourier decomposition of a test function for the variational characterization of $\sigma_2(M,g_{\epsilon})$ on $M$ close to its boundary $\Sigma$ and estimate its Dirichlet energy (see \eqref{fourier_decomp} and \eqref{dirichlet_estimate}).  
\begin{proof}[Proof of Theorem \ref{thm:fixed_vol}]
In our situation, where $\Sigma$ is the boundary of $M$, we identify a neighborhood of $\Sigma$ in $M$ with $\Sigma\times[0,3).$
The above family of Riemannian metrics $g_\Sigma(t)$ will be used to extend $g_\Sigma$ to the interior of $M$ using a cutoff function.
Given $\epsilon>0$, let $f_{\epsilon} \colon [0,3) \to \mathbb{R}_{\geq 1}$ be a smooth function satisfying:
\begin{align*}
f_{\epsilon}(t) = \begin{cases}
1+ \epsilon^{-3}t \hspace{5 pt} &\text{for } t \in [0,1) \\
1 \hspace{5 pt} &\text{for } t \in [2, 3).
\end{cases}
\end{align*}  
On the cylindrical part of $M$ we use coordinates $(x,t)\in \Sigma \times [0,3)$ to define the following metric:
\begin{gather}\label{eq:themetric}
g_{\epsilon} = g_\Sigma(f_{\epsilon}(t))+dt^2.
\end{gather}
Then, after extending $g_\epsilon$ outside of the cylindrical part, we obtain a smooth Riemannian metric on $M$, which we also denote by $g_\epsilon$.  The restriction of $g_\epsilon$ to the boundary is $g_\Sigma$.
 By choosing the same extension for every $\epsilon>0$ we ensure that $\vol(M,g_\epsilon)$ is independent of $\epsilon$. We will show that $\sigma_2(M,g_\epsilon)$ diverges as $\epsilon \searrow 0$.

Observe that the volume form on $\Sigma$ induced from the metric $g_\Sigma(t)$ is independent of $t$, hence coincides with the volume form induced from $g_\Sigma=g_\Sigma(1)$, which we denote by $d\Omega$. Moreover, let $\{\pi_i\}_{i=1}^{\infty}$ be an orthonormal basis for $L^2(\Sigma, d\Omega)$ consisting of eigenfunctions for the Laplace-Beltrami operator induced from the metric $g_\Sigma$. 

The idea is to locally express a test function for the variational characterization of $\sigma_2$ in a Fourier series near the boundary. The Rayleigh quotient for the variational characterization of Steklov eigenvalues is:
\begin{align*}
R(u,g_\epsilon)  =  \frac{\int_M |d u|^2 \dvol(g_\epsilon)}{\int_{\Sigma} |u|^2 d\Omega}.
\end{align*}
Let $u$ be a smooth function on $M$ such that $\int_{\Sigma} u \hspace{3 pt} d\Omega = 0$ and $\int_{\Sigma} |u|^2 \hspace{3 pt} d\Omega = 1$. If we restrict $u$ to $\Sigma \times [0,1)$, then we have the following Fourier series representation for $u(x,t)$:
\begin{align*}
u(x,t) = \sum_{i=1}^{\infty} a_i(t) \pi_i(x).
\end{align*} 
Our assumptions on $u$ translate into the following conditions on the Fourier coefficients:
\begin{align*}
a_1(0) = 0 \hspace{4 pt} \text{ and } \sum_{i = 2}^{\infty} a_i(0)^2 = 1.
\end{align*} 
Using the fact that the functions $\pi_i$ do not depend on $t$, the exterior derivative of $u$ is simply given by:
\begin{align}
\label{fourier_decomp}
du(x,t) = \sum_{i = 1}^{\infty} a_i'(t) \pi_i(x) dt+ a_i(t) d \pi_i(x).
\end{align}
The rest of the proof is similar to that of Proposition 3.1 in \cite{girouard_preprint}.
The following estimate on $R(u, g_\epsilon)$ follows from \eqref{fourier_decomp}:
\begin{align}
\label{dirichlet_estimate}
R(u,g_{\epsilon}) &= \int_M |du|^2 \dvol(g_\epsilon)\\
&\geq \int_{\Sigma \times (0,1)} |du|^2 \dvol(g_{\epsilon}) \nonumber\\
&= \sum_{i = 1}^{\infty} \int_0^1\int_{\Sigma} 
\left(|a_i'(t)|^2 |\pi_i(x)|^2 
 +|a_i(t)|^2 |d \pi_i(x)|_{g_\Sigma(f_\epsilon(t))}^2\right) \hspace{3 pt} d\Omega(x) dt.\nonumber
\end{align}
It follows from Theorem \ref{bleecker_theorem} that $\lambda_2(g_\Sigma(f_\epsilon(t)))\geq \delta f_\epsilon(t)$. Using the variational characterization of Laplace eigenvalues, we have:
\begin{align}
\label{eig_bound}
\int_{\Sigma} | d \pi_i|^2_{g_\Sigma(f_{\epsilon(t)})} \hspace{3 pt} d\Omega(x) \geq
\lambda_2(g_\Sigma(f_\epsilon(t)))\int_{\Sigma} \pi_i^2 \hspace{3 pt} d\Omega(x)
\geq
 \delta \left( 1+ \epsilon^{-3}t \right).
\end{align} 
Applying \eqref{eig_bound}, we obtain the following lower bound on the Rayleigh quotient:
\begin{align*}
R(u,g_{\epsilon}) \geq \sum_{i=1}^\infty \int_0^1 \bigl(|a'_i(t)|^2 + \delta |a_i(t)|^2\left( 1+ \epsilon^{-3}t \right)\bigr) dt
\end{align*}
We have the following two cases to consider: \\
\noindent {\it Case 1.}  There exists $t_0 \in (0, \epsilon)$ such that $|a_i(t_0)| \leq \frac{1}{2} |a_i(0)|$, which implies $|a_i(0)-a_i(t_0)|\geq |a_i(0)| - |a_i(t_0)| \geq \frac{1}{2}|a_i(0)|$. Thus, by the Cauchy-Schwarz inequality, we have:
\begin{align*}
\int_0^1 |a_i'(t)|^2 dt \geq \int_0^{t_0} |a_i'(t)|^2 dt \geq \frac{1}{t_0} \left( \int_0^{t_0} a_i'(t) dt \right)^2 &\geq \frac{1}{\epsilon} \left(a_i(0) - a_i(t_0) \right)^2\\
&\geq \frac{1}{4 \epsilon} |a_i(0)|^2. 
\end{align*}
\noindent {\it Case 2.} For every $t \in (0, \epsilon)$, $|a_i(t)| \geq \frac{1}{2} |a_i(0)|$. Then:
\begin{align*}
\int_0^1 \delta |a_i(t)|^2\left( 1+ \epsilon^{-3}t \right) dt \geq  \int_0^{\epsilon} \delta |a_i(t)|^2\left( 1+ \epsilon^{-3}t \right) dt &\geq \frac{\delta}{4} |a_i(0)|^2\int_0^{\epsilon} \left( 1+ \epsilon^{-3}t \right) dt \\
&\geq \frac{\delta}{8 \epsilon}|a_i(0)|^2.
\end{align*} 
Thus, in either case, for $A = \min\{ 1/4, \delta/8 \}$ we have:
\begin{align*}
R(u,g_{\epsilon}) &\geq \sum_{i=1}^\infty \int_0^1 |a'_i(t)|^2 + \frac{1}{4}|a_i(t)|^2\left( 1+ \epsilon^{-3}t \right) dt \\
&\geq \frac{A}{\epsilon} \sum_{i=2}^\infty |a_i(0)|^2 \\
&= \frac{A}{ \epsilon}. 
\end{align*}
By the variation characterization of $\sigma_2(g_\epsilon)$, the lower bound follows. 
\end{proof}

\begin{remark}\label{rem:smallvolumediameter}
If one replaces the definition \eqref{eq:themetric} of the metric $g_\epsilon$ with
\begin{gather*}\label{eq:themetric}
g_{\epsilon} = h_\epsilon(t)(g_\Sigma(f_{\epsilon}(t))+dt^2),
\end{gather*}
where $h_\epsilon\in C^\infty(M)$ is an appropriate positive function which is small away from the boundary $\Sigma$, then one can also ensure that 
$\vol(M,g_\epsilon)\to 0$ and diam$(M,g_\epsilon)\to 0$ as $\epsilon\searrow 0$.
\end{remark}

\section{Localized conformal deformations and the Steklov spectrum}
  \label{gaps_with_fixed_boundary}
The goal of this section is to prove Theorem \ref{thm:localizedgap}.
We will make use of the mixed Steklov-Neumann problem, which we now recall. Partition the connected components of $\Sigma$ into two sets labeled $\Sigma^S \neq \emptyset$ and $\Sigma^N$. The mixed Steklov-Neumann problem is the following: 
 \begin{align}
 \label{mixed_prob}
 \begin{cases}
 \Delta u = 0 \hspace{4 pt} \text{in } M,\\
 \partial_{\nu} u = \sigma u \hspace{4 pt} \text{on }\Sigma^S,\\
 \partial_{\nu} u = 0 \hspace{4 pt} \text{on } \Sigma^N.
 \end{cases}
 \end{align}
Its spectrum is denoted as follows:
 \begin{equation*}
 0 = \sigma_1^N < \sigma_2^N \leq \sigma_3^N \leq \cdots\nearrow \infty. 
 \end{equation*} 
The Steklov-Neumann eigenvalues have the following min-max variational characterization:
\begin{align*}
\sigma_k^N(g)  = \inf_{E \in \mathcal{H}_k} \sup_{u \neq 0 \in E} \frac{\int_M |d u|^2 \dvol(g)}{\int_{\Sigma^S} |u|^2 \dvol(g_\Sigma)},
\end{align*}
and similarly the max-min variational characterization:
\begin{align*}
\sigma_k^N(g) = \sup_{E \in \mathcal{H}_{k-1}} \inf_{u \neq 0 \in E^{\perp}} \frac{\int_M |d u|^2 \dvol(g)}{\int_{\Sigma^S} |u|^2 \dvol(g_\Sigma)},
\end{align*}
where $\mathcal{H}_k$ is the set of $k$-dimensional subspaces of $C^\infty(M)$. We start by observing the following lemma, which adapts Proposition 3.3 in \cite{girouard_preprint} to the mixed Steklov-Neumann setting. 

\begin{lemma}
\label{mixed}
Let $(M,g)$ be a compact, connected, smooth Riemannian manifold with boundary $\Sigma$ and dimension $n+1 \geq 3$. As above, partition the connected components of $\Sigma$ into two sets: $\Sigma^S \neq \emptyset$ and $\Sigma^N$. Assume that there is a neighborhood of $\Sigma^S$ (in $M$) that is isometric to $\Sigma^S \times [0,L)$ for some $L>0$. Then there exists $C>0$ such that for every $\epsilon>0$ sufficiently small, there exists a metric $g_\epsilon = e^{\delta_{\epsilon}} g$ which coincides with $g$ in the neighborhood $\Sigma^S \times [0, \epsilon)$ of $\Sigma$ such that:
\begin{align*}
\sigma^N_2(g_\epsilon) \geq \frac{C}{\epsilon}.
\end{align*}
\end{lemma}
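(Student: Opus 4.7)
The plan is to design $\delta_\epsilon$ that vanishes on $\Sigma^S \times [0,\epsilon)$ and equals a large constant $K_\epsilon$ on the complement of a slightly larger collar $\Sigma^S \times [0,2\epsilon)$, interpolating smoothly with $\delta_\epsilon \geq 0$ on the intermediate annulus. Concretely, fix a smooth cutoff $\rho \colon [0,\infty) \to [0,1]$ with $\rho \equiv 0$ on $[0,1]$ and $\rho \equiv 1$ on $[2,\infty)$, and set $\delta_\epsilon(x,t) = K_\epsilon \rho(t/\epsilon)$ on the collar $\Sigma^S \times [0,L)$, $\delta_\epsilon \equiv K_\epsilon$ elsewhere. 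Since $\dim M = n+1 \geq 3$, the conformal formula
\[ \int_M |du|_{g_\epsilon}^2 \dvol(g_\epsilon) = \int_M e^{(n-1)\delta_\epsilon/2}|du|_g^2 \dvol(g) \]
has strictly positive exponent $(n-1)/2$, so the Dirichlet energy on the interior region $B_\epsilon := M \setminus (\Sigma^S \times [0,2\epsilon))$ is amplified by the factor $e^{(n-1)K_\epsilon/2}$. Taking $K_\epsilon \to \infty$ as $\epsilon \to 0$ will force low-energy functions to be nearly constant on $B_\epsilon$.

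Now take any $u \in C^\infty(M)$ with $\int_{\Sigma^S} u\, d\Omega = 0$ and $\|u\|_{L^2(\Sigma^S,g)} = 1$, and set $E := \int_M |du|_{g_\epsilon}^2 \dvol(g_\epsilon)$. Let $c$ denote the $g$-mean of $u$ on $B_\epsilon$. The amplification immediately yields $\|du\|_{L^2(B_\epsilon,g)}^2 \leq e^{-(n-1)K_\epsilon/2}E$. Combining Poincaré--Wirtinger on $(B_\epsilon,g)$ with a trace inequality from $B_\epsilon$ to its boundary slice $\Sigma^S \times \{2\epsilon\}$ (applied via the sub-collar $\Sigma^S \times [2\epsilon, L) \subset B_\epsilon$), we obtain
\[ \|u-c\|_{L^2(\Sigma^S \times \{2\epsilon\}, g)}^2 \leq C_1\, e^{-(n-1)K_\epsilon/2} E \]
for a constant $C_1 = C_1(M, g, \Sigma^S, L)$ that does not depend on $\epsilon$ (see below).

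Next, in the thin slab $\Sigma^S \times [0,2\epsilon)$, the fundamental theorem of calculus together with Cauchy--Schwarz gives
\[ (u-c)(x,0)^2 \leq 2(u-c)(x, 2\epsilon)^2 + 4\epsilon \int_0^{2\epsilon}|\partial_t u(x,t)|^2\, dt. \]
Integrating over $\Sigma^S$ and using $\delta_\epsilon \geq 0$ on the slab (hence $\|du\|_{L^2(\Sigma^S\times[0,2\epsilon), g)}^2 \leq E$) yields
\[ \|u-c\|_{L^2(\Sigma^S, g)}^2 \leq 2C_1 e^{-(n-1)K_\epsilon/2}E + 4\epsilon E. \]
Because $u$ has zero mean and unit $L^2$ norm on $\Sigma^S$, the left-hand side equals $1 + c^2\vol(\Sigma^S) \geq 1$. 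Choosing $K_\epsilon = \tfrac{2}{n-1}\log(2C_1/\epsilon)$ makes the first term on the right at most $\epsilon E$, so $1 \leq 5\epsilon E$, i.e., $E \geq 1/(5\epsilon)$. Since $u$ was arbitrary, $\sigma_2^N(g_\epsilon) \geq 1/(5\epsilon)$, proving the lemma with $C = 1/5$.

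The main obstacle is verifying the uniformity of the Poincaré--Wirtinger and trace constants for the moving family $(B_\epsilon,g)$. For the trace, this is direct: the product collar $\Sigma^S \times [2\epsilon, L)$ sits inside $B_\epsilon$, and averaging the identity $v(x,2\epsilon) = v(x,t_0) - \int_{2\epsilon}^{t_0}\partial_s v\, ds$ over $t_0 \in [2\epsilon, L]$ yields the trace bound with constants depending only on $L$ (provided $\epsilon \leq L/4$). For Poincaré--Wirtinger, uniformity follows from stability of the first nonzero Neumann eigenvalue under the gentle collar perturbation $\epsilon \to 0$. In fact, even slow growth of these constants in $\epsilon$ would not affect the conclusion, since one can always enlarge $K_\epsilon$ further to absorb it.
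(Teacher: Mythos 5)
Your proposal is correct, but it takes a genuinely different route from the paper's proof. The paper (following Propositions~3.1 and 3.3 of \cite{girouard_preprint}) expands the test function in a Fourier series $u(x,t)=\sum_j a_j(t)\phi_j(x)$ using Laplace eigenfunctions of $\Sigma^S$, bounds the Rayleigh quotient from below by $\sum_j\int_0^L\bigl(|a_j'|^2+\lambda_j|a_j|^2\bigr)e^{(n-1)\delta_\epsilon}\,dt$, and then runs a delicate case analysis on whether the ``locally constant'' part $\sum_{j\leq b}a_j(0)^2$ is small or large, invoking two separate ODE-type estimates. Your argument bypasses the Fourier decomposition entirely: you amplify the Dirichlet energy by $e^{(n-1)K_\epsilon/2}$ on $B_\epsilon$ (the correct exponent for $g_\epsilon=e^{\delta_\epsilon}g$ in dimension $n+1\geq3$), use a Poincar\'e--Wirtinger inequality and a trace estimate on $B_\epsilon$ to force the $L^2$ mass of $u-c$ on $\Sigma^S\times\{2\epsilon\}$ to be tiny, and then propagate this across the thin undeformed slab by the fundamental theorem of calculus. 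Both approaches hinge on the same fact that $(n-1)/2>0$, i.e.\ the conformal weight amplifies gradients in the interior; the paper's version yields a more explicit constant (in terms of $\lambda_{b+1}$, the component volumes, and a Neumann eigenvalue) and, as a by-product, the stronger statement $\sigma^N_{b+1}\geq A/\epsilon$ when $u$ is orthogonal to all locally constant functions, whereas your soft argument is shorter, needs no knowledge of the spectrum of $\Sigma^S$, and cleanly isolates the two estimates whose uniformity in $\epsilon$ must be checked. You correctly identify that uniformity as the main point requiring justification, and your fallback observation --- that even $\epsilon$-dependent constants $C_1(\epsilon)$ can be absorbed by enlarging $K_\epsilon$ --- disposes of it.
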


\begin{proof}
Let $\delta_{\epsilon} \geq 0$ be a smooth function that is identically equal to $-2\log(\epsilon)$ in $M \backslash(\Sigma^S \times [0,L/2])$ and identically zero on $\Sigma^S \times [0,\epsilon)$ (assume $\epsilon<1$). Set $g_{\epsilon} = e^{2 \delta_\epsilon}g$. The strategy is to locally decompose a test function $u$ on $M$ into a Fourier series using the Laplace eigenfunctions of $\Sigma^S$. Then we will use the Fourier coefficients to obtain a lower bound for the Dirichlet energy of $u$.  

Let $\Sigma_1^S,..., \Sigma_b^S$ be a decomposition of $\Sigma^S$ into connected components. Let $\{ \phi_j \}_{j=1}^{\infty}$ be an orthonormal basis for $L^2(\Sigma^S)$ given by eigenfunctions of the Laplace-Beltrami operator on $\Sigma^S$ with corresponding eigenvalues $\{ \lambda_j \}_{j=1}^{\infty}$. Let $u$ be a smooth function on $M$ that is orthogonal to constant functions on $\Sigma^S$ and is normalized so that $\int_{\Sigma^S}|u|^2 \dvol(g_\Sigma)=1$. When we restrict $u$ to $\Sigma^S \times [0,L)$, we get:
\begin{align*}
u(x,t)  = \sum_{j \geq 1} a_j(t) \phi_j(x),
\end{align*}  
where:
\begin{align*}
a_j(0) = \left| \Sigma^S_j \right|^{-1/2} \int_{\Sigma^S_j} u \hspace{3 pt} \dvol(g_\Sigma), \hspace{4 pt} \text{for }j = 1,...,b.
\end{align*}
The constraints on $u$ translate into the following:
\begin{align*}
a_1(0) \left| \Sigma^S_1 \right|^{1/2} + ... + a_b(0) \left| \Sigma^S_b \right|^{1/2} = 0,  
\end{align*}
and
\begin{align*}
\sum_{j \geq 1} a_j(0)^2 = 1. 
\end{align*}
Notice also that:
\begin{align}
\label{df-expansion}
du(x,t) = \sum_{j \geq 1} \left( a'_j(t) \phi_j(x) dt + a_j(t) d \phi_j(x) \right). 
\end{align}
Substituting this expression into the Rayleigh quotient for $u$ yields:
\begin{align}
R(u, g_{\epsilon}) &= \int_M |d u|^2 \dvol(g_\epsilon) \nonumber \\
&  \geq \int_{\Sigma^S \times (0,L)} |du|^2 e^{(n-1)\delta_\epsilon}\dvol(g) \nonumber \\
& \geq \sum_{j \geq 1} \int_0^L \left( a'_j(t)^2 + \lambda_j a_j(t)^2 \right) e^{(n-1)\delta_\epsilon} dt.
\label{contrast} 
\end{align} 
From \eqref{contrast}, we see two ways to make $R(u,g_\epsilon)$ large; either the function $a_j$ decreases quickly as we move away from $\Sigma^S$, making the first term large in \eqref{contrast}, or $a_j$ remains large and the second term contributes to the Dirichlet energy. The argument in Proposition 3.1 of \cite{girouard_preprint} with $\Sigma$ replaced by $\Sigma^S$ yields a rigorous version of this heuristic argument. Indeed, following the proof of Proposition 3.1 in \cite{girouard_preprint} one gets:
\begin{align}
\label{lower_bound}
R(u,g_\epsilon) \geq \frac{A}{\epsilon} \left( 1 - \sum_{j \leq b} a_j(0)^2 \right),
\end{align}
where $A = \frac{1}{4}\min \left \{ \lambda_{b+1}, \frac{1}{4} \right \}$. Notice that if $u$ is assumed to be orthogonal to constant functions on $\Sigma^S$, then $\sum_{j \leq b} a_j(0)^2 = 0$. It follows that $\sigma^N_{b+1}(g_\epsilon) \geq \frac{A}{\epsilon}$. 
Thus, it suffices to assume that $\sum_{j \leq b} a_j(0)^2 \geq \frac{1}{2}$ and $b \geq 2$. 

Let $\Omega$ be the complement in $M$ of $\Sigma^S \times [0, L/2)$. Following the argument in Proposition 3.3 of \cite{girouard_preprint} with $\Sigma_j$ replaced by $\Sigma^S_j$, one gets the following lower bound for  $R(u, g_\epsilon)$ in this case:
\begin{align*}
R(u, g_\epsilon) \geq \frac{B}{2 \epsilon},
\end{align*}
where:
\begin{align*}
B = \frac{\min \{ \mu (\Omega, g) b L, \frac{1}{2b} \}}{32 (b-1)^2}\frac{\min_{j \leq b} |\Sigma^S_j |}{\max_{j \leq b} |\Sigma_j^S |},
\end{align*}
and $\mu(\Omega, g)$ is the first positive Neumann eigenvalue of the Laplacian in $\Omega$. 
Thus, by the min-max characterization of the mixed problem, we conclude that $\sigma^N_2(g_\epsilon) \geq \frac{C}{\epsilon}$, which completes the proof of the lemma. 
\end{proof}

We turn our attention towards studying $\sigma^{\#}_i(\Omega)$, where $\Omega$ is a domain in $M$. Let $\partial_E \Omega := \partial \Omega \cap \Sigma$ denote the {\it exterior boundary} of $\Omega$ and $\partial_I \Omega := \partial \Omega \backslash \left( \partial_E \Omega \right)$ denote the {\it interior boundary}. Throughout, we will assume that the exterior boundary of $\Omega$ coincides with $\Sigma$ (i.e. that $\Omega$ contains the boundary). We leverage Lemma \ref{mixed} to study $\sigma_i^{\#}(\Omega)$ and show that whether it is finite depends on the number of connected components of $\Omega$ that intersect the boundary of $M$. The idea is, roughly, to use the Steklov-Neumann problem to uncouple the different connected components of $\Omega$ from the rest of $M$. Then we will compare the Steklov-Neumann eigenvalues of the uncoupled domains to the Steklov eigenvalues of the conformal metric on $M$. The result will follow from an application of Lemma \ref{mixed}. The next proposition shows that if there are $b$ connected components of $\Omega$ for which $\partial_E\Omega \neq \emptyset$, then $\sigma^{\#}_b(\Omega)$ is finite:

\begin{proposition}
\label{half_theorem}
Let $(M,g)$ be a compact, connected, smooth manifold of dimension $n+1 \geq 3$ with boundary $\Sigma$ and suppose that $\Omega \subset M$ is a closed subdomain with smooth boundary and $\partial_E \Omega = \Sigma$. Moreover, suppose that $\Omega$ has $b$ connected components for which $\partial_E\Omega \neq \emptyset$. If $e^{\delta} g$ is a conformal deformation of $g$ with $\supp(\delta) \subset \Omega^o$, then:
\begin{align*}
\sigma_{b} (e^{\delta}g) \leq C,
\end{align*}
where $C>0$ is a constant that depends on $g$ and $\Omega$. Consequently, $\sigma^{\#}_{b}(\Omega)<\infty$. 
\end{proposition}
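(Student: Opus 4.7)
The plan is to apply the min-max characterization of Steklov eigenvalues using a $b$-dimensional space of test functions chosen to be locally constant on every connected component of $\Omega$; this renders the Dirichlet energy oblivious to the conformal factor, so the Rayleigh quotient reduces to a fixed geometric constant.

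First, label the connected components of $\Omega$ as $U_1, \ldots, U_s$ so that $\partial_E U_i \neq \emptyset$ exactly for $i \in \{1, \ldots, b\}$. Because $\Omega$ has smooth boundary, the closures $\bar U_j$ are pairwise disjoint and $N := \overline{M \setminus \Omega}$ is a compact smooth manifold whose boundary is $\bigsqcup_j \partial_I U_j$. For each $i \leq b$, build a function $\phi_i \in C^\infty(M)$ which equals $1$ on $U_i$ and $0$ on every other $U_j$ (including the interior ``islands'' with $j > b$); this amounts to extending locally constant boundary data smoothly across $N$, which is automatic.

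The entire argument then comes down to the observation that $d\phi_i \equiv 0$ on $\Omega$, hence on $\supp(\delta)$. For any linear combination $u = \sum_{i=1}^b c_i \phi_i$ this gives, by Cauchy--Schwarz,
\begin{equation*}
\int_M |du|^2 \dvol(e^\delta g) = \int_N |du|^2 \dvol(g) \leq b \sum_{i=1}^b c_i^2 \int_N |d\phi_i|^2 \dvol(g),
\end{equation*}
a bound independent of $\delta$. On the boundary, $u|_\Sigma = \sum_i c_i \mathbf{1}_{\partial_E U_i}$ has pairwise disjoint supports, so $\int_\Sigma u^2 \dvol(g_\Sigma) = \sum_i c_i^2 |\partial_E U_i|_{g_\Sigma}$. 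Therefore the Rayleigh quotient is uniformly bounded by $C := b \max_{i \leq b} |\partial_E U_i|_{g_\Sigma}^{-1} \int_N |d\phi_i|^2 \dvol(g)$. Since the traces $\phi_i|_\Sigma$ are linearly independent, $V := \myspan(\phi_1, \ldots, \phi_b)$ is a $b$-dimensional subspace of $C^\infty(M)$ with $b$-dimensional image in $L^2(\Sigma)$, and the min-max characterization of Steklov eigenvalues (via harmonic extensions of boundary traces) yields $\sigma_b(M, e^\delta g) \leq C$, whence $\sigma_b^\#(\Omega) \leq C < \infty$.

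There is no serious obstacle in this argument; the proof is essentially the right choice of test functions. The only subtle point, and the one that distinguishes it from a naive attempt, is the requirement that $\phi_i$ be constant on every component of $\Omega$, including the interior ``islands'' $U_j$ with $j > b$ that contribute nothing to the boundary integral: if $\phi_i$ were permitted to vary there, $d\phi_i$ would no longer vanish on $\supp(\delta)$ and the conformal factor would contaminate the Dirichlet energy, making the upper bound $\delta$-dependent and the whole strategy collapse.
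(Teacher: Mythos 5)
Your proof is correct and follows essentially the same strategy as the paper: pick a $b$-dimensional space of test functions that are locally constant on $\Omega$ (hence with gradient supported away from $\supp\delta$) and vanish away from the component they correspond to, so that the Dirichlet energy is computed with respect to the undeformed metric and the boundary traces remain linearly independent. The paper's version simply normalizes each $\psi_i$ to $|\Sigma_i|^{-1/2}$ on its component so that the boundary traces form an orthonormal system, while you work with $\phi_i=1$ on $U_i$ and absorb the normalization into the final constant; the observation you flag as the subtle point — that $\phi_i$ must also be constant (zero) on the interior islands so that $d\phi_i$ genuinely vanishes on $\supp\delta$ — is handled implicitly in the paper by choosing the neighborhoods $\widetilde\Omega_i$ disjoint from the other components.
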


\begin{proof}
We construct test functions and use the variational characterization of the Steklov eigenvalues. First, some notation. Enumerate the connected components of $\Omega$ for which $\partial_E \Omega \neq \emptyset$ as $\{\Omega_i\}_{1\leq i \leq b}$. Moreover, set $\Sigma_i = \partial_E \Omega_i$. For each $i$ let $\widetilde{\Omega}_i$ be an open set containing $\Omega_i$ for which $\widetilde{\Omega}_i \cap \widetilde{\Omega}_j = \emptyset$. Then for each $i$, let $\psi_i(x)$ be a smooth function satisfying:
\begin{align*}
\psi_i(x) = \begin{cases}
|\Sigma_i|^{-1/2} \hspace{5 pt} &\text{for } x \in \Omega_i\\
0 \hspace{5 pt} &\text{for } x \in \widetilde{\Omega}_i^c. 
\end{cases}
\end{align*}
Observe that:
\begin{align*}
\int_M |d \psi_j |^2 \dvol(e^\delta g) = \int_{\widetilde{\Omega}_i \backslash \Omega_i} |d \psi_j|^2 \dvol(g) < C,
\end{align*}
and $\int_\Sigma |\psi_i|^2 \dvol(g_\Sigma) = 1$. Finally, notice that the $\psi_i$ are mutually orthogonal on the boundary. Thus, by the min-max characterization of the eigenvalues, we have: $\sigma_{b}(e^{\delta}g) <C$, as required. 
\end{proof}

We pivot now to higher Steklov eigenvalues. The next proposition demonstrates that there are conformal deformations which make $\sigma_{b+1}$ arbitrarily large, assuming that a neighborhood of the boundary is isometric to a Riemannian product. 

\begin{proposition}
\label{auxiliary}
Let $(M,g)$ be a compact, connected, smooth manifold of dimension $n+1 \geq 3$ with boundary $\Sigma$ and suppose that a neighborhood of the boundary is isometric to $\Sigma \times [0,L)$ for some $L>0$. 
Let $\Omega$ be a domain in $M$ with smooth boundary that contains $\Sigma$ and suppose that $\Sigma \times [0,L/2] \subset \Omega^o$. 
Let $b$ be the number of connected components of $\Omega$ that have non-empty intersection with $\Sigma$. Then for $\epsilon>0$ there exists a conformal deformation of $g$, denoted $e^{\delta_{\epsilon}}g$, such that $\supp(\delta_{\epsilon}) \subset \Omega^o$ and:
\begin{align*}
\lim_{\epsilon \to 0} \sigma_{b+1} = + \infty.
\end{align*}
\end{proposition}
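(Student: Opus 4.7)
The plan is to reduce the conclusion to $b$ independent instances of Lemma \ref{mixed}. Enumerate the connected components of $\Omega$ meeting $\Sigma$ as $\Omega_1, \ldots, \Omega_b$, and set $\Sigma_i^S := \Omega_i \cap \Sigma$ and $\Sigma_i^N := \partial\Omega_i \setminus \Sigma$. The hypothesis $\Sigma \times [0, L/2] \subset \Omega^o$ ensures that each $\Omega_i$ is a compact manifold with boundary partitioned as $\Sigma_i^S \cup \Sigma_i^N$, carrying an isometric product neighborhood $\Sigma_i^S \times [0, L/2)$ of $\Sigma_i^S$ contained in $\Omega_i^o$---exactly the setting of Lemma \ref{mixed}. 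Applying Lemma \ref{mixed} to each $(\Omega_i, g|_{\Omega_i})$ produces a conformal factor $\delta_\epsilon^{(i)}$ on $\Omega_i$ with $\sigma_2^N(\Omega_i, e^{\delta_\epsilon^{(i)}} g) \geq C_i/\epsilon$. Gluing these together by setting $\delta_\epsilon := \delta_\epsilon^{(i)}$ on $\Omega_i$ and $\delta_\epsilon := 0$ on $M \setminus \bigcup_i \Omega_i$ yields a smooth function on $M$ with $\supp(\delta_\epsilon) \subset \Omega^o$; write $g_\epsilon := e^{\delta_\epsilon} g$.

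The lower bound on $\sigma_{b+1}(M, g_\epsilon)$ then follows from min-max combined with the mixed Steklov-Neumann variational principle. Given any $(b+1)$-dimensional subspace $V \subset H^1(M)$, the linear map $V \to \mathbb{R}^b$ defined by $u \mapsto \bigl(\int_{\Sigma_1^S} u \, \dvol(g_\Sigma), \ldots, \int_{\Sigma_b^S} u \, \dvol(g_\Sigma)\bigr)$ has kernel of dimension at least one, producing a nonzero $u \in V$ with $\int_{\Sigma_i^S} u \, \dvol(g_\Sigma) = 0$ for every $i$. For such a $u$, the variational characterization of $\sigma_2^N(\Omega_i, g_\epsilon)$ gives
\[
\int_{\Omega_i} |du|^2 \, \dvol(g_\epsilon) \geq \sigma_2^N(\Omega_i, g_\epsilon) \int_{\Sigma_i^S} u^2 \, \dvol(g_\Sigma) \geq \frac{C_i}{\epsilon} \int_{\Sigma_i^S} u^2 \, \dvol(g_\Sigma),
\]
so summing over $i$ and using $\Sigma = \bigsqcup_i \Sigma_i^S$ gives $R(u, g_\epsilon) \geq \min_i C_i/\epsilon$. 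The min-max characterization of $\sigma_{b+1}$ then yields $\sigma_{b+1}(M, g_\epsilon) \geq \min_i C_i/\epsilon \to \infty$ as $\epsilon \searrow 0$.

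The main technical point is making sure Lemma \ref{mixed} can be arranged to produce each $\delta_\epsilon^{(i)}$ with compact support in $\Omega_i^o$, so that the glued function is smooth across $\partial\Omega$. The construction in the proof of Lemma \ref{mixed} takes its conformal factor to be of order $-\log\epsilon$ on the entire bulk $\Omega_i \setminus (\Sigma_i^S \times [0, L/2))$, which abuts $\Sigma_i^N$ and therefore does not extend by zero across $\partial_I\Omega_i$. The collar hypothesis leaves enough room to instead localize the large values of $\delta_\epsilon^{(i)}$ on a compact set $K_i \subset \Omega_i^o$ that still separates $\Sigma_i^S$ from $\Sigma_i^N$---for instance, a thickened cross-section inside $\Sigma_i^S \times (0, L/2)$ together with an interior retract of the bulk of $\Omega_i$. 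Rerunning Cases 1 and 2 of the proof of Lemma \ref{mixed} with this modification preserves the $C_i/\epsilon$ bound, since Case 1 only uses largeness of the conformal factor on the cross-section, while Case 2's bulk Neumann argument remains valid provided the retracted bulk is connected and has a positive first Neumann eigenvalue.
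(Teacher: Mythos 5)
Your approach is essentially the same as the paper's: decompose $\Omega$ into its components $\Omega_1,\dots,\Omega_b$ meeting $\Sigma$, impose a conformal factor of size roughly $\epsilon^{-2}$ on (most of) each $\Omega_i$, invoke Lemma~\ref{mixed} to bound $\sigma_2^N(\Omega_i, g_\epsilon)$ from below, and feed this into the max-min characterization of $\sigma_{b+1}$. The one genuinely different move is your aggregation step: rather than picking the single component $\Omega_1$ on which $\int_{\Sigma_1}|u|^2 \geq 1/b$ (the pigeonhole the paper uses, which costs a factor of $1/b$), you sum $\int_{\Omega_i}|du|^2 \geq \sigma_2^N(\Omega_i,g_\epsilon)\int_{\Sigma_i^S}u^2$ over all $i$ and use $\Sigma = \bigsqcup_i \Sigma_i^S$. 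That is cleaner and gives a marginally sharper constant; it is a cosmetic improvement, not a different method.

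You have also correctly put your finger on a detail the paper's write-up glides over. The paper defines $\delta_\epsilon$ to be $-2\log\epsilon$ on $(\Sigma\times[0,L/2))^c\cap\Omega$ and zero only outside $\cup_i\widetilde\Omega_i$, where $\widetilde\Omega_i$ are open sets \emph{containing} $\Omega_i$. Since the bulk region where $\delta_\epsilon = -2\log\epsilon$ reaches all the way to $\partial_I\Omega$, the transition down to zero happens inside $\widetilde\Omega_i\setminus\Omega_i$, which lies outside $\Omega^o$. As written, $\supp(\delta_\epsilon)\not\subset\Omega^o$, so the paper's $\delta_\epsilon$ does not verify its own support requirement. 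Your fix — push the transition strictly inside $\Omega_i^o$, keeping the large conformal factor on a compact separating set — is the right repair, and your observation that Case 1 of the Lemma~\ref{mixed} argument survives unchanged (it only uses the collar $\Sigma_i^S\times[0,\epsilon)$ where $\delta_\epsilon=0$ anyway) is correct.

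What you leave under-verified is the bulk half of the Lemma~\ref{mixed} argument after this modification. That part (the analogue of Proposition~3.3 of \cite{girouard_preprint}) uses the first nonzero Neumann eigenvalue of the bulk region where $\delta_\epsilon$ is large, and it is only invoked when $\Sigma^S$ has at least two connected components. When you shrink the bulk to a compact $K_i\subset\Omega_i^o$, you assert without proof that $K_i$ can be taken connected with positive first Neumann eigenvalue and that the rest of the estimate carries over. This is plausible — taking $K_i$ to be $\Omega_i$ minus a thin tubular neighborhood of $\partial\Omega_i$ would typically work — but it needs a sentence or two: one must check that $K_i$ still separates $\Sigma_i^S\times\{0\}$ from $\partial_I\Omega_i$, remains connected, and that the piecewise-constant test-function comparison that produces the Neumann eigenvalue bound can be routed through $K_i$. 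In other words, you have diagnosed the gap in the paper accurately; the proposed cure needs the same amount of care that the paper itself omitted.

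One small point of logical hygiene in your min-max step: the nonzero $u\in V$ you produce with $\int_{\Sigma_i^S}u\,\dvol(g_\Sigma)=0$ for all $i$ could a priori vanish identically on $\Sigma$, in which case $R(u,g_\epsilon)$ is undefined rather than large. This is harmless (one passes to the quotient by functions vanishing on $\Sigma$, or observes that $\sup_V R = +\infty$ in that case), but it is worth a clause.
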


\begin{proof} First, we construct the conformal deformation of $g$. We will use the same notation as in Proposition \ref{half_theorem}. Let $\delta_{\epsilon}\geq 0$ be a smooth function that is identically $-2\log(\epsilon)$ on $(\Sigma \times [0,L/2))^c \cap \Omega$ and is zero on $\left(\cup_i \widetilde{\Omega}_i\right)^c$ and $\Sigma \times [0,\epsilon)$. Set $g_{\epsilon} = e^{2\delta_{\epsilon}}g$. 
Let $u$ be a smooth function on $M$ that is jointly orthogonal to constant functions on $\Sigma_i$, for $1 \leq i \leq b$. Notice that $u$ is orthogonal to $\myspan \{ \psi_{1},...,\psi_b \}$ on the boundary. Moreover, assume that $\int_{\Sigma} |u|^2 \dvol(g_\Sigma)=1$ and (possibly after relabeling the components of $\Omega$) that $\int_{\Sigma_1} |u|^2 \dvol(g_\Sigma) \geq \frac{1}{b}$. Consider the Steklov-Neumann eigenvalue problem on $\Omega_1$ with Steklov boundary conditions on $\Sigma_1$ and Neumann boundary conditions on $\partial_I \Omega_1$. Then we have:
\begin{align*}
\sigma_{b+1}(g_\epsilon) \geq \int_M |du|^2 \dvol(g_\epsilon) \geq \frac{\int_{\Omega_1} |du|^2 \dvol(g_\epsilon)}{b \int_{\Sigma_1} |u|^2 \dvol(g_\Sigma)} \geq \frac{1}{b}\sigma^N_{2}\left(\Omega_1,g_{\epsilon} \right),
\end{align*}
However, by Lemma \ref{mixed} and the construction of $g_\epsilon$, we know that there is a constant $A$ for which $\sigma^N_{2}\left(\Omega_1,g_\epsilon\right) \geq \frac{A}{\epsilon}$, which completes the proof.  
\end{proof}

Having produced a large gap in the Steklov spectrum using conformal deformations of $(M,g)$ when $g$ is a product metric in a neighborhood of $\Sigma$, we use a quasi-isometry argument to deduce Theorem \ref{thm:localizedgap} for a more general metric. Recall that two metrics $g_1$ and $g_2$ on $M$ are {\it quasi-isometric} with ratio $A \geq 1$ if for every $x \in M$ and $v \in T_xM$ we have:
\begin{align*}
\frac{1}{A} \leq \frac{g_1(x)(v,v)}{g_2(x)(v,v)} \leq A.
\end{align*}
Using the variational characterization of the Steklov eigenvalues, one can deduce that if $g_1$ and $g_2$ are quasi-isometric with ratio $A$ (and $\dim(M) = n$) then:
\begin{align*}
\frac{1}{A^{2n+1}} \leq \frac{\sigma_k(M,g_1)}{\sigma_k(M,g_2)} \leq A^{2n+1}.
\end{align*}
For a proof of this fact, see Proposition 2.2 of \cite{girouard_preprint}. 
\begin{proof}[Proof of Theorem \ref{thm:localizedgap}] Fix some domain $\Omega \subset M$ with $b$ connected components with non-empty exterior boundary on which we allow conformal deformations. That $\sigma^{\#}_{b}(\Omega) < \infty$ was the content of Proposition \ref{half_theorem}. Thus, it suffices to construct a conformal deformation in $\Omega$ such that $\sigma_{b+1}(g_\epsilon) \to \infty$. 
Given $\delta>0$, let
\begin{align*}
N(\delta) = \left \{ p \in M; \hspace{4 pt} d_g(p, \Sigma) <\delta \right \}. 
\end{align*}
The normal exponential map along the boundary defines Fermi coordinates on a neighborhood $V$ of the boundary. The distance $t$ to the boundary is one of the coordinates. It follows from Gauss' Lemma that the Riemannian metric is expressed as $g = h_t + dt^2$, where $h_t$ is the restriction of $g$ to the parallel hypersurface $\Sigma_t$ at distance $t$ from the boundary $\Sigma$. Moreover, $h_0 = g_\Sigma$ and, by continuity, there exists $\delta>0$ such that the restriction of $h_t$ to $\Sigma_t$ is quasi-isometric to $g_\Sigma$ with ratio two for $0 \leq t \leq 3 \delta$. Assume also that $\delta$ is sufficiently small so that $N(\delta/2) \subset \Omega^o$. Let $\chi \colon [0, 3 \delta] \to \mathbb{R}$ be a smooth non-decreasing function satisfying:
\begin{align*}
\chi(t) = \begin{cases}
0 \hspace{4 pt} \text{for } 0\leq t \leq \delta\\
1 \hspace{4 pt} \text{for }2\delta \leq t \leq 3\delta.
\end{cases}
\end{align*}
Using Fermi coordinates, define the following metric:
\begin{align*}
g_{\delta} = \begin{cases} \chi(t)(h_t+dt^2)+(1-\chi(t))(g_\Sigma + dt^2) \hspace{5 pt} \text{in }N(3 \delta)&\\
g \hspace{5 pt} \text{otherwise.}&
\end{cases}
\end{align*}
A few observations: by construction, $g_{\delta}$ is quasi-isometric to $g$ with ratio two, and it is isometric to a Riemannian product $g_\Sigma +dt^2$ on $N(\delta)$. We now apply Proposition \ref{auxiliary} to $(M, g_\delta)$. That is, using Proposition \ref{auxiliary}, we know that there is a family of conformal deformations $e^{2f_\epsilon}g_\delta$ such that: 
\begin{align*}
\sigma_{b+1}(M, e^{2f_\epsilon}g_\delta) \geq C/\epsilon.
\end{align*}
where $C>0$ is an explicit constant that depends on $g_\delta$ but not on $\epsilon$. Since $e^{2f_\epsilon}g_\delta$ is quasi-isometric to $e^{2f_\epsilon}g$ with the same constant of two, the theorem follows. 
\end{proof}

\bibliographystyle{abbrv}
\bibliography{mybib}

\end{document}